\newtheorem{thm}{Theorem}[section]
\newtheorem{cor}[thm]{Corollary}
\theoremstyle{definition}
\theoremstyle{remark}
\newtheorem*{ackn}{Acknowledgement}
\numberwithin{equation}{section}
\begin{document}

\begin{flushright}
 \small{\emph{To appear in Acta Scientiarum Mathematicarum (Szeged)}}
\end{flushright}

\title[Notes on the characterization of derivations]
{Notes on the characterization of derivations}

\author[E.~Gselmann]{Eszter Gselmann}
\address{
Department of Analysis
\\
Institute of Mathematics
\\
University of Debrecen
\\
P. O. Box: 12.
\\
Debrecen
\\
H--4010
\\
Hungary}

\email{gselmann@science.unideb.hu}


\begin{abstract}
Although the characterization of ring derivations has an extensive literature,
up to now, all of the characterizations have had the following
form: additivity and another property imply that the function in question is a derivation.
The aim of this note is to point out that derivations can be described via a single equation.
\end{abstract}

\thanks{This research has been supported by the Hungarian Scientific Research Fund (OTKA)
Grant NK 814 02 and by the T\'{A}MOP 4.2.1./B-09/1/KONV-2010-0007 project implemented
through the New Hungary Development Plan co-financed by the European Social Fund and
the European Regional Development Fund.}
\subjclass[2000]{39B50, 13N15}
\keywords{derivation, Cauchy difference, cocycle equation}
\maketitle

\section{Introduction and preliminaries}

The purpose of this paper is to provide new characterization theorems on derivations.
At first, we will list some preliminary results that will be used in the sequel.
All of these statements and definitions can be found in Kuczma \cite{Kuc85} and
also in Zariski--Samuel \cite{ZS58}.

Let $Q$ be a commutative ring and let $P$ be a subring of $Q$.
A function $f:P\rightarrow Q$ is called a \emph{derivation} if it is additive,
i.e.,
\[
f(x+y)=f(x)+f(y)
\quad
\left(x, y\in P\right)
\]
and also satisfies the equation
\[
f(xy)=xf(y)+yf(x).
\quad
\left(x, y\in P\right)
\]

A fundamental example for derivations is the following.

Let $\mathbb{F}$ be a field, and let in the above definition $P=Q=\mathbb{F}[x]$
be the ring of polynomials with coefficients from $\mathbb{F}$. For a polynomial
$p\in\mathbb{F}[x]$, $p(x)=\sum_{k=0}^{n}a_{k}x^{k}$, define the function
$f:\mathbb{F}[x]\rightarrow\mathbb{F}[x]$ as
\[
f(p)=p',
\]
where $p'(x)=\sum_{k=1}^{n}ka_{k}x^{k-1}$ is the derivative of the polynomial $p$.
Then the function $f$ clearly fulfils
\[
f(p+q)=f(p)+f(q)
\]
and
\[
f(pq)=pf(q)+qf(p)
\]
for all $p, q\in\mathbb{F}[x]$. Hence $f$ is a derivation.

In $\mathbb{R}$ (the set of the real numbers) the identically zero function is
obviously a (trivial) derivation. It is difficult to find however another example,
because every real derivation has the following properties.
\begin{enumerate}[---]
  \item If $f:\mathbb{R}\rightarrow\mathbb{R}$ is a derivation, then $f(x)=0$
  for all $x\in\mathrm{algcl}\mathbb{Q}$ (the algebraic closure of the set of the rational numbers).
  \item If $f:\mathbb{R}\rightarrow\mathbb{R}$ is a derivation, and $f$ is measurable, or bounded above
  or below on the set which has positive Lebesgue measure, then $f$ is identically zero.
\end{enumerate}

In spite of the above properties, there exist non--trivial derivations in $\mathbb{R}$,
see Theorem 14.~2.~2. in \cite{Kuc85}.

The characterization of derivations has an extensive literature, the reader should consult for
instance Horinouchi--Kannappan \cite{HK71}, Jurkat \cite{Jur65},
Kurepa \cite{Kur64, Kur65} and also the two monographs Kuczma \cite{Kuc85} and
Zariski--Samuel \cite{ZS58}.

Nevertheless, to the best of the author's knowledge, all of the characterizations have the following
form: additivity and another property imply that the function in question is a derivation.
We intend to show that derivations can be characterized by one single functional equation.

More precisely, we would like to examine whether the equations occurring in the definition of
derivations are independent in the following sense.
Let $\lambda, \mu\in Q\setminus\left\{0\right\}$  be arbitrary,
$f:P\rightarrow Q$ be a function and consider the
equation
\[
\lambda\left[f(x+y)-f(x)-f(y)\right]+
\mu\left[f(xy)-xf(y)-yf(x)\right]=0.
\quad
\left(x, y\in P\right)
\]
Clearly, if the function $f$ is a derivation, then this equation holds.
In the next section we will investigate the opposite direction, and it will be proved that
under some assumptions on the rings $P$ and $Q$, derivations can be
characterized through the above equation.
This result will be proved as a consequence of the main theorem that will be devoted
to the equation
\[
f(x+y)-f(x)-f(y)=g(xy)-xg(y)-yg(x),
\quad
\left(x, y\in P\right)
\]
where $f, g:P\rightarrow Q$ are unknown functions. 

We remark that similar investigations were made by
Dhombres \cite{Dho88}, Ger \cite{Ger98, Ger00} and also by Ger--Reich \cite{GR10} concerning ring homomorphisms. 
For instance, in Ger \cite{Ger98} the following theorem was proved. 
\begin{thm}
 Let $X$ and $Y$ are two rings, and assume that 
for all $x\in X$ there exists $e_{x}\in X$ such that $x e_{x}=x$, suppose further 
that $Y$ has no elements of order 2 and does not admit zero divisors. 
If $f$ is a solution of the equation 
\[
 f(x+y)+f(xy)=f(x)+f(y)+f(x)f(y) 
\qquad 
\left(x, y\in X\right)
\]
such that $f(0)=0$, 
then either $3f$ is even and $3f(2x)=0$ for all $x\in X$, 
or $f$ yields a homomorphism between $X$ and $Y$. 
\end{thm}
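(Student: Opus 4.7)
The plan is to reduce the given equation to the standard additivity and multiplicativity conditions by exploiting the symmetry under $(x,y)\leftrightarrow(y,x)$ and $(x,y)\mapsto(-x,-y)$, together with the substitutions afforded by the right identities $e_{x}$.

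As a warm-up I would perform the obvious substitutions: taking $y=x$ gives $f(2x)+f(x^{2})=2f(x)+f(x)^{2}$, while $y=-x$ yields $f(-x^{2})=f(x)+f(-x)+f(x)f(-x)$. Replacing $x$ by $-x$ in the latter and comparing forces $f(x)f(-x)=f(-x)f(x)$, and interchanging the roles of $x$ and $y$ in the original equation yields the commutator identity $f(xy)-f(yx)=f(x)f(y)-f(y)f(x)$. Next I would combine the four instances of the equation at the arguments $(\pm x,\pm y)$. Introducing the even and odd parts $E(x):=f(x)+f(-x)$ and $O(x):=f(x)-f(-x)$, suitable sums and differences produce the pair
\[
E(x+y)+E(x-y)+2E(xy)=2E(x)+2E(y)+E(x)E(y),
\]
\[
E(x+y)-E(x-y)+2O(xy)=O(x)O(y),
\]
which encode the whole content of the hypothesis in a symmetric/antisymmetric form and isolate the part of $f$ where degeneracy can occur. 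The right identities are then brought in: substituting $y=e_{x}$ gives $f(x+e_{x})=f(e_{x})+f(x)f(e_{x})$, and pairing this with the instance in which the roles are swapped, together with the commutator identity above, pins down $f(e_{x})$ and allows one to evaluate $f$ on the subring generated by $x$ and $e_{x}$ in a controlled manner.

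The heart of the argument, and its main obstacle, is then a dichotomy. One has to massage the identities above until one obtains a relation of the shape $A(x)\cdot B(x,y)=0$ in $Y$, where $A(x)$ carries the $3$-torsion (some expression involving $3f(2x)$ and $3E(x)$) and $B(x,y)$ is a polynomial in the values of $f$ that vanishes precisely when the Cauchy difference of $f$ does. Because $Y$ has no zero divisors and no elements of order $2$, for each $x$ either $A(x)=0$---which, combined with a further symmetrisation, forces $3f$ to be even and $3f(2x)=0$ for every $x\in X$---or else $B(x,y)\equiv 0$, so that $f$ is additive, whereupon the original equation collapses to $f(xy)=f(x)f(y)$ and $f$ is a ring homomorphism. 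The truly delicate point is that the dichotomy must be uniform in $x$: a priori the domain argument splits case by case, and one has to rule out a \emph{hybrid} in which $f$ is additive on part of $X$ and $3$-torsion on the rest; this uniformity is again forced by $Y$ being a domain.
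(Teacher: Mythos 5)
First, a point of reference: the paper does not prove this statement at all --- it is quoted verbatim from Ger \cite{Ger98} as background motivation, so there is no in-paper proof to compare yours against. Your proposal therefore has to stand on its own, and as it stands it does not: it is an outline of a strategy rather than a proof. The preliminary computations you do carry out are correct --- the substitutions $y=x$ and $y=-x$, the commutator identity $f(xy)-f(yx)=f(x)f(y)-f(y)f(x)$, and the two identities for $E(x)=f(x)+f(-x)$ and $O(x)=f(x)-f(-x)$ obtained by summing and alternating the four instances at $(\pm x,\pm y)$ all check out. But everything after that is a description of what a proof \emph{would} look like, not an argument.

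Concretely, the entire content of the theorem lives in the step you label ``the heart of the argument'': you assert that the identities can be ``massaged'' into a relation $A(x)\cdot B(x,y)=0$ with $A$ carrying the $3$-torsion and $B$ vanishing exactly when $f$ is additive, but you never exhibit $A$ or $B$, never show how the right identities $e_x$ enter (your substitution $y=e_x$ gives $f(x+e_x)+f(x)=f(x)+f(e_x)+f(x)f(e_x)$, i.e.\ $f(x+e_x)=f(e_x)+f(x)f(e_x)$, and it is not explained how this ``pins down $f(e_x)$''), and never account for where the factor $3$ in the conclusion ``$3f$ is even and $3f(2x)=0$'' comes from --- that factor is the fingerprint of the actual computation, and its absence from your sketch shows the computation has not been done. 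You also explicitly flag, without resolving, the uniformization problem: a no-zero-divisor hypothesis gives a pointwise alternative ``for each $x$, $A(x)=0$ or $B(x,\cdot)\equiv 0$,'' and passing from that to a global dichotomy over all of $X$ is a standard but nontrivial step (typically one assumes both alternatives fail at some points and derives a contradiction by evaluating the identity at a mixed pair). Until the factorization is actually derived and the uniformization argument is written out, this is a plan, not a proof.
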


During the proof of the main result the celebrated \emph{cocycle equation}
will play a key role.
About this equation one can read e.g. in
Acz\'{el} \cite{Acz65}, Davison--Ebanks \cite{DE95},
Ebanks \cite{Eba82}, Erd\H{o}s \cite{Erd59},  
Hossz\'{u} \cite{Hos63} and also in Jessen--Karpf--Thorup \cite{JKT69}. 
In the next section we will however utilize only
Theorem 3 of Ebanks \cite{Eba79}, which reads as follows.

\begin{thm}\label{T1.1}
Let $A$ be an integral domain and $X$ a uniquely $A$--divisible unitary module over $A$. 
Then, $F, G:A^{2}\to X$ satisfy equations 
\begin{equation}
 \tag{$\alpha$} F(a, b)=F(b, a) \quad (a, b\in A), 
\end{equation}

\begin{equation}
\tag{$\beta$} F(a+b, c)+F(a, b)=F(a, b+c)+F(b, c) \quad (a, b, c\in A), 
\end{equation}
\begin{equation}
\tag{$\gamma$} G(a, b)=G(b, a) \quad (a, b\in A), 
\end{equation}
\begin{equation}
\tag{$\delta$} cG(a, b)+G(ab, c)=aG(b, c)+G(a, bc) \quad (a, b, c\in A), 
\end{equation}
\begin{equation}
\tag{$\varepsilon$} F(ac, bc)-cF(a, b)=G(a+b, c)-G(a, c)-G(b, c) \quad (a, b, c\in A), 
\end{equation}
\begin{equation}
\tag{$\zeta$} \sum_{i=1}^{p}F(1, i1)=0, \quad p=\mathrm{char}A, 
\end{equation}
if and only if there is a map $f:A\to X$ representing $F$ and $G$ 
through equations 
\begin{equation}
\tag{A} F(a, b)=f(a+b)-f(a)-f(b),
\end{equation} 
respectively, 
\begin{equation}
\tag{B} G(a, b)=f(ab)-af(b)-bf(a). 
\end{equation}
Moreover, if $A$ is ordered, the same result holds with $A$ replaced by 
\[A_{+}=\left\{a\in A\, \vert \, a>0\right\}.\] 
\end{thm}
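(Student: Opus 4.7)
The forward implication is a routine verification. If $f:A \to X$ yields $F$ and $G$ via (A) and (B), then $(\alpha)$ and $(\gamma)$ follow from the commutativity of addition and multiplication in $A$; both sides of $(\beta)$ reduce to $f(a+b+c)-f(a)-f(b)-f(c)$; and straightforward expansions settle $(\delta)$ and $(\varepsilon)$. For $(\zeta)$ in characteristic $p$, the telescoping sum collapses to a multiple of $p\cdot f(1)$, which vanishes because $X$ is a module over $A$ and $p\cdot 1_{A}=0$.

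For the converse, my plan is to proceed in two stages. In the first stage, I would use $(\alpha)$, $(\beta)$, together with $(\zeta)$ and the unique $A$-divisibility of $X$, to invoke the classical symmetric $2$-cocycle theorem: on a suitably divisible (respectively $(\zeta)$-compatible) abelian group, every symmetric $2$-cocycle is a coboundary. This yields a function $f_{0}:A\to X$ satisfying (A), determined uniquely up to addition of an additive homomorphism $A\to X$. This ambiguity is precisely the freedom that the second stage will need.

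In the second stage, I would measure the failure of (B) by the multiplicative defect
\[
D(a,b) := G(a,b) - \bigl[f_{0}(ab) - a f_{0}(b) - b f_{0}(a)\bigr].
\]
Symmetry of $D$ follows from $(\gamma)$. Substituting (A) for $f_{0}$ into $(\varepsilon)$ and simplifying shows that $D$ has the form $\phi(ab)-a\phi(b)-b\phi(a)$ for some correction $\phi:A\to X$; condition $(\delta)$, together with the fact that $A$ is an integral domain, then forces $\phi$ to behave as an additive homomorphism by letting one cancel from products $ab$ and extend consistently across $A$. Replacing $f_{0}$ by $f:=f_{0}+\phi$ preserves (A), because $\phi$ is additive, while now delivering (B), completing the construction.

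The main obstacle is this second stage: one must show not only that the multiplicative defect $D$ can be written in the required coboundary form, but also that the resulting $\phi$ extends coherently to all of $A$ without contradicting the additivity demanded by the preservation of (A). Here both $(\varepsilon)$, which encodes the compatibility between the additive and multiplicative cocycles, and the integral-domain hypothesis, which supplies cancellation, are essential; $(\zeta)$ must be re-invoked to handle the characteristic-$p$ case. For the final assertion, the same argument carries over verbatim with $A_{+}$ in place of $A$, because all constructions occur within the semigroup structure already present on $A_{+}$, and the symmetric cocycle lifting step admits a positive-cone version under the same divisibility assumption.
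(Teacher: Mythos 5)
The paper does not prove this statement at all: Theorem \ref{T1.1} is quoted verbatim as Theorem 3 of Ebanks \cite{Eba79}, so there is no in-paper argument to compare yours with, and your proposal has to stand on its own. The forward implication you give is routine and correct (including the observation that $(\zeta)$ telescopes to $-(p1_{A})f(1)=0$ for a unitary module). The converse, however, has genuine gaps at exactly the two places where the real work lies. In Stage 1 you invoke ``the classical symmetric $2$-cocycle theorem'' on the strength of $(\alpha)$, $(\beta)$, $(\zeta)$ and unique $A$-divisibility alone. This is fine in characteristic $0$, where $X$ becomes a $\mathbb{Q}$-vector space, hence a divisible (injective) abelian group, and $\mathrm{Ext}_{\mathbb{Z}}(A,X)=0$; but in characteristic $p$ the module $X$ is annihilated by $p$, so it is not divisible, the group of symmetric cocycle classes $\mathrm{Ext}_{\mathbb{Z}}(A,X)$ is generally large (a product of copies of $X/pX=X$ when the additive group of $A$ is an infinite $\mathbb{F}_{p}$-space), and the single scalar relation $(\zeta)$ cannot trivialize it. One must already use $(\varepsilon)$, i.e.\ the multiplicative structure, to obtain the additive representation (A); since your plan defers $(\varepsilon)$ to Stage 2, Stage 1 as stated is not merely unproven but false as a standalone lemma.

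In Stage 2 the correct and easy part is that the defect $D=G-G_{0}$, with $G_{0}(a,b)=f_{0}(ab)-af_{0}(b)-bf_{0}(a)$, is symmetric by $(\gamma)$, biadditive by $(\varepsilon)$ (because $G_{0}$ satisfies $(\varepsilon)$ exactly against $F$), and still satisfies $(\delta)$. But the assertion that such a $D$ ``has the form $\phi(ab)-a\phi(b)-b\phi(a)$ for some additive $\phi$'' is precisely the substantive content of the theorem --- it is the multiplicative cocycle problem of Jessen--Karpf--Thorup --- and ``letting one cancel from products $ab$ and extend consistently across $A$'' is not an argument: you give no construction of $\phi$, no verification that it is additive (which is exactly what preserving (A) requires), and no point at which unique $A$-divisibility or the absence of zero divisors actually enters. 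As it stands, the proposal is a plausible reduction of the theorem to two unproved lemmas, one of which is stated too strongly to be true; to repair it you would need to run the additive and multiplicative representations simultaneously, as Ebanks does in \cite{Eba79}.
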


From this theorem with the choice $F\equiv 0$ (and interchanging the roles of $f$ and $g$) 
the following statement can be
obtained immediately.

\begin{thm}\label{T1.2}
Let $A$ be a commutative ring, $X$ be a module over $A$ and
$f:A\rightarrow X$ be a function such that
\[
f(ab)=af(b)+bf(a)
\qquad
\left(a, b\in A\right). 
\]
Then the function $F:A\times A\rightarrow X$ defined by
$(A)$ fulfils
\begin{equation}
\tag{$\alpha$} F(a, b)=F(b, a), 
\end{equation}
equation $(\beta)$ is satisfied and also
\begin{equation}
\tag{$\eta$} F(ac, bc)=cF(a, b)
\end{equation}
holds for any $a, b, c\in A$.

Furthermore, in case $A$ is an integral domain and $X$ is a
unitary module over $A$ which is uniquely $A$--divisible, then
the function $F$ defined by the function $f$ through equation $(A)$ is the only function
which satisfies equations $(\alpha)$, $(\beta)$ and $(\eta)$.
\end{thm}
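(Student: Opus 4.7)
The statement has two independent parts, which I would treat separately. For the first assertion — that the three equations $(\alpha),(\beta),(\eta)$ all hold for the $F$ built from a Leibniz-type $f$ via $(A)$ — I would rely on direct computation. Commutativity of $+$ in $A$ gives $(\alpha)$ without using any property of $f$. For the cocycle identity $(\beta)$, I would expand both $F(a+b,c)+F(a,b)$ and $F(a,b+c)+F(b,c)$ using $(A)$; each side telescopes to $f(a+b+c)-f(a)-f(b)-f(c)$, so equality is automatic. The Leibniz rule actually enters only in $(\eta)$: writing
\[
F(ac,bc)=f((a+b)c)-f(ac)-f(bc)
\]
and applying $f(xy)=xf(y)+yf(x)$ to each of the three terms yields $(a+b)f(c)+cf(a+b)-af(c)-cf(a)-bf(c)-cf(b)$, in which the coefficients of $f(c)$ cancel and what remains is exactly $c\,F(a,b)$.

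For the second assertion under the additional hypotheses that $A$ is an integral domain and $X$ is uniquely $A$-divisible, the plan is to invoke Theorem \ref{T1.1} with the specialization $G\equiv 0$. Under that choice conditions $(\gamma)$ and $(\delta)$ on $G$ are trivially satisfied, and condition $(\varepsilon)$ collapses to $F(ac,bc)-c\,F(a,b)=0$, which is precisely $(\eta)$. Since the structural hypotheses on $A$ and $X$ match those of Theorem \ref{T1.1}, any $F:A\times A\to X$ satisfying $(\alpha),(\beta),(\eta)$ must be of the form $F(a,b)=f(a+b)-f(a)-f(b)$ for some $f:A\to X$ with $f(ab)=af(b)+bf(a)$; this is the asserted uniqueness (every $F$ solving the three equations is the one produced by such an $f$).

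I expect the main obstacle to be condition $(\zeta)$ of Theorem \ref{T1.1}, namely $\sum_{i=1}^{p}F(1,i\cdot 1)=0$ where $p=\operatorname{char}A>0$. In characteristic zero it is vacuous and the deduction is immediate. In positive characteristic one would have to derive $(\zeta)$ from $(\alpha),(\beta),(\eta)$; the natural route is to use $p\cdot 1_A=0$ together with $(\eta)$ (and the absence of $p$-torsion in $X$ coming from unique $A$-divisibility) to force $F(a,0)=0$, and then to telescope the sum $\sum_{i=1}^{p}F(1,i\cdot 1)$ through iterated application of $(\beta)$ until the residue is killed by the $p$-torsion-free structure of $X$.
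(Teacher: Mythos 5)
Your direct verification of $(\alpha)$, $(\beta)$ and $(\eta)$ is correct, and it has the merit of working for an arbitrary commutative ring $A$ and module $X$, which is exactly the generality the first half of the statement claims (the paper instead reads these off from the ``if'' direction of Theorem \ref{T1.1}, whose stated hypotheses are stronger than needed). For the converse you also pick the right specialization of Theorem \ref{T1.1}, namely $G\equiv 0$ (the paper's remark says ``$F\equiv 0$'', which is evidently a slip, since $G\equiv0$ is what turns $(\varepsilon)$ into $(\eta)$ and $(B)$ into the Leibniz rule), and you correctly isolate $(\zeta)$ as the one hypothesis not visibly implied by $(\alpha)$, $(\beta)$, $(\eta)$. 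In characteristic zero $(\zeta)$ is vacuous and your deduction is complete; this is all the paper itself offers.

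The gap is in your plan for positive characteristic, and it cannot be closed as proposed. First, unique $A$-divisibility only concerns division by \emph{nonzero} elements of $A$; when $\mathrm{char}\,A=p>0$ the element $p\cdot 1_A$ is zero, so far from $X$ being $p$-torsion-free, a unitary module then satisfies $pX=0$. Second, $(\zeta)$ genuinely does not follow from $(\alpha)$, $(\beta)$, $(\eta)$: take $A=X=\mathbb{F}_2$ and define $F(1,1)=1$, $F=0$ otherwise. Then $(\alpha)$ is clear, all eight instances of $(\beta)$ check out, and $(\eta)$ holds (for $c=1$ it is trivial, for $c=0$ it reads $F(0,0)=0$), yet $\sum_{i=1}^{2}F(1,i1)=F(1,1)=1\neq 0$; moreover the only map $f:\mathbb{F}_2\to\mathbb{F}_2$ with $f(ab)=af(b)+bf(a)$ is $f\equiv 0$ (put $a=b=1$), whose Cauchy difference is identically zero, so this $F$ is not representable at all. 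Hence the uniqueness half of the theorem is simply false in positive characteristic unless $(\zeta)$ is added as a hypothesis or $\mathrm{char}\,A=0$ is assumed; this is a defect of the statement that the paper's one-line derivation also glosses over, but your proposed telescoping argument cannot repair it.
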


\section{The main result}

Our main result is contained in the following

\begin{thm}\label{T2.1}
Let $\mathbb{F}$ be a field, $X$ be a vector space over $\mathbb{F}$ and
$f, g:\mathbb{F}\rightarrow X$ be functions such that
\begin{equation}
\tag{$\mathscr{E}$}
f(x+y)-f(x)-f(y)=g(xy)-xg(y)-yg(x)
\end{equation}
holds for all $x, y\in\mathbb{F}$.
Then, and only then, there exist additive functions $\alpha, \beta\colon\mathbb{F}\to X$  and a 
function 
$\varphi\colon \mathbb{F}\to X$ with the property
\[
 \varphi(xy)=x\varphi(y)+y\varphi(x) 
\qquad 
\left(x, y\in\mathbb{F}\right), 
\]
 such that 
\[
 f(x)=\beta(x)+\frac{1}{2}\alpha(x^{2})-x\alpha(x)
\qquad 
\left(x, y\in\mathbb{F}\right)
\]
and 
\[
 g(x)=\varphi(x)+\alpha(x) 
\qquad 
\left(x, y\in\mathbb{F}\right)
\]
are satisfied. 
\end{thm}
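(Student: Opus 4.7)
The plan is to treat sufficiency by direct substitution and necessity by reducing to Theorem~\ref{T1.1}. For the \emph{if} direction, I would check that, under the proposed form of $f$ and $g$, both sides of $(\mathscr{E})$ equal $\alpha(xy)-x\alpha(y)-y\alpha(x)$. On the left, the additivity of $\beta$ and $\alpha$ together with $(x+y)^{2}-x^{2}-y^{2}=2xy$ and the expansion of $(x+y)\alpha(x+y)$ reduce the Cauchy difference of $f$ to this form; on the right, the Leibniz rule for $\varphi$ kills the $\varphi$-contribution.

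For the \emph{only if} direction, the crux is to show that the Cauchy difference
\[
A(x,y):=g(x+y)-g(x)-g(y)
\]
of $g$ satisfies the multiplicativity $A(ac,bc)=c\,A(a,b)$ for all $a,b,c\in\mathbb{F}$. To this end I would use the manifestly symmetric trilinear coboundary
\[
T(x,y,z):=f(x+y+z)-f(x+y)-f(x+z)-f(y+z)+f(x)+f(y)+f(z).
\]
Decomposing $T(x,y,z)=[f(x+y+z)-f(x+y)-f(z)]-[f(x+z)-f(x)-f(z)]-[f(y+z)-f(y)-f(z)]$ and applying $(\mathscr{E})$ to each bracket yields $T(x,y,z)=A(xz,yz)-z\,A(x,y)$. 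Setting $z=1$ gives $T(x,y,1)=0$, and the three-fold symmetry of $T$ transfers this to $T(x,1,y)=0$, that is, $A(xy,y)=y\,A(x,1)$. Because $\mathbb{F}$ is a field, for $b\neq 0$ I may set $y=b$, $x=a/b$ to obtain $A(a,b)=b\,A(a/b,1)$, whence the multiplicativity drops out by a one-line evaluation of $A(ac,bc)$, the cases $b=0$ or $c=0$ being trivial.

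At this point $A$ is a symmetric additive cocycle satisfying condition $(\eta)$, so I would invoke Theorem~\ref{T1.1} with $F\equiv A$ and $G\equiv 0$. Conditions $(\alpha)$--$(\varepsilon)$ are then automatic, while $(\zeta)$ telescopes to $g(p+1)-(p+1)g(1)=0$, which vanishes because $p+1=1$ in characteristic~$p$ (and is vacuous in characteristic zero). The theorem produces a function $\varphi\colon\mathbb{F}\to X$ satisfying the Leibniz rule, whose Cauchy difference equals $A$; setting $\alpha:=g-\varphi$ therefore gives an additive function with $g=\varphi+\alpha$.

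Finally, defining $\beta(x):=f(x)-\tfrac{1}{2}\alpha(x^{2})+x\,\alpha(x)$, a routine computation using the additivity of $\alpha$ yields
\[
\beta(x+y)-\beta(x)-\beta(y)=[f(x+y)-f(x)-f(y)]-[\alpha(xy)-x\alpha(y)-y\alpha(x)],
\]
which vanishes by $(\mathscr{E})$, since its right-hand side $g(xy)-xg(y)-yg(x)$ reduces to $\alpha(xy)-x\alpha(y)-y\alpha(x)$ once the $\varphi$-terms cancel by the Leibniz rule. Hence $\beta$ is additive and $f$ attains the stated form. The main obstacle is the multiplicativity of $A$; once it is secured, Theorem~\ref{T1.1} does the rest of the work.
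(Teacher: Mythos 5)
Your proposal is correct and follows essentially the same route as the paper: both arguments reduce to showing that the Cauchy difference $A=\mathscr{C}_{g}$ of $g$ satisfies the homogeneity condition $(\eta)$, then invoke Ebanks' cocycle theorem (Theorem \ref{T1.1}) to produce the Leibniz function $\varphi$ and the additive $\alpha=g-\varphi$, and finally peel off the additive part $\beta$ of $f$ by the same computation. Your derivation of $(\eta)$ via the totally symmetric second Cauchy difference $T(x,y,z)$ is just a tidier packaging of the paper's device of writing identity $(\varepsilon)$ for the pair $(\mathscr{C}_{g},\mathscr{D}_{g})$ twice with $x$ and $z$ interchanged and subtracting; as a small bonus you also check condition $(\zeta)$ explicitly, which the paper passes over.
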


\begin{proof}
Define the functions $\mathscr{C}_{f}, \mathscr{D}_{f}, \mathscr{C}_{g}$ and $\mathscr{D}_{g}$
on $\mathbb{F}\times \mathbb{F}$ by
\[
\begin{array}{lcl}
\mathscr{C}_{f}(x, y)&=&f(x+y)-f(x)-f(y) \\
\mathscr{D}_{f}(x, y)&=&f(xy)-xf(y)-yf(x) \\
\mathscr{C}_{g}(x, y)&=&g(x+y)-g(x)-g(y) \\
\mathscr{D}_{g}(x, y)&=&g(xy)-xg(y)-yg(x),
\end{array}
\]
respectively.
In view of Theorem \ref{T1.1}., we immediately get that the pairs
$(\mathscr{C}_{f}, \mathscr{D}_{f})$ and $(\mathscr{C}_{g}, \mathscr{D}_{g})$
fulfill the system of equations $(\alpha)$--$(\varepsilon)$.
Furthermore, equation $(\mathscr{E})$ yields that
\begin{equation}
\tag{$\mathscr{E}^{\ast}$}\mathscr{C}_{f}(x, y)=\mathscr{D}_{g}(x, y)
\end{equation}
for all $x, y\in \mathbb{F}$.
Due to equation $(\varepsilon)$,
\begin{equation}\label{Eq2.1}
\mathscr{C}_{g}(xz, yz)-z\mathscr{C}_{g}(x, y)=\mathscr{D}_{g}(x+y, z)-\mathscr{D}_{g}(x, z)-\mathscr{D}_{g}(y, z)
\end{equation}
holds for all $x, y, z\in\mathbb{F}$.
Interchanging the role of $x$ and $z$ in the previous equation, we obtain that
\begin{equation}\label{Eq2.2}
\mathscr{C}_{g}(xz, xy)-x\mathscr{C}_{g}(z, y)=\mathscr{D}_{g}(y+z, x)-\mathscr{D}_{g}(z, x)-\mathscr{D}_{g}(y, x)
\end{equation}
for any $x, y, z\in\mathbb{F}$.
Let us subtract equation \eqref{Eq2.2} from \eqref{Eq2.1}, to obtain
\begin{multline*}
\mathscr{C}_{g}(xz, yz)-z\mathscr{C}_{g}(x, y)
-\mathscr{C}_{g}(xz, xy)+x\mathscr{C}_{g}(z, y)
\\
=
\mathscr{D}_{g}(x+y, z)-\mathscr{D}_{g}(x, z)-\mathscr{D}_{g}(y, z)
-\mathscr{D}_{g}(z+y, x)+\mathscr{D}_{g}(z, x)+\mathscr{D}_{g}(y, x).
\end{multline*}
Because of $(\mathscr{E}^{\ast})$, the function $\mathscr{D}_{g}$ can be replaced by
$\mathscr{C}_{f}$. This implies however that
\begin{multline*}
\mathscr{C}_{g}(xz, yz)-z\mathscr{C}_{g}(x, y)
-\mathscr{C}_{g}(xz, xy)+x\mathscr{C}_{g}(z, y)
\\
=
\mathscr{C}_{f}(x+y, z)+\mathscr{C}_{f}(x, y)
-\mathscr{C}_{f}(x, y+z)-\mathscr{C}_{f}(y, z)=0,
\end{multline*}
for all $x, y, z\in\mathbb{F}$, where we used that the function
$\mathscr{C}_{f}$ fulfils $(\alpha)$ and $(\beta)$.
This equation with $z=1$ yields that
\[
\mathscr{C}_{g}(x, xy)=x\mathscr{C}_{g}(1, y),
\]
or if we replace $y$ by $\dfrac{y}{x}$, $(x\neq 0)$
\[
\mathscr{C}_{g}(x, y)=x\mathscr{C}_{g}\left(1, \frac{y}{x}\right).
\quad \left(x, y\in\mathbb{F}, x\neq 0\right)
\]
We will show that from this identity the homogeneity of
$\mathscr{C}_{g}$ follows. Indeed, let $t, x, y\in\mathbb{F},
t, x\neq 0$ be arbitrary, then
\[
\mathscr{C}_{g}(tx, ty)=tx\mathscr{C}_{g}\left(1, \frac{ty}{tx}\right)=
tx\mathscr{C}_{g}\left(1, \frac{y}{x}\right)=t\mathscr{C}_{g}(x, y).
\]
If $x=0$, we get from equation $\left(\mathscr{E}^{\ast}\right)$ that $\mathscr{C}_{g}(0,
0)=0$, thus for arbitrary $t\in\mathbb{F}$,
\[
\mathscr{C}_{g}(t 0, t 0)= 0=t\mathscr{C}_{g}(0, 0).
\]
Furthermore, in case $t=0$, then for any $x, y\in \mathbb{F}$
\[
\mathscr{C}_{g}(tx, ty)=\mathscr{C}_{g}(0, 0)=0=t\mathscr{C}_{g}(x, y).
\]
This means that the function $\mathscr{C}_{g}$ is homogeneous and
fulfils equations $(\alpha)$ and $(\beta)$. In view of Theorem
\ref{T1.2}., there exists a function $\varphi\colon\mathbb{F}\to X$ such that 
\[
 \varphi(xy)=x\varphi(y)+y\varphi(x)
\qquad 
\left(x, y\in\mathbb{F}\right)
\]
and 
\[
 \mathscr{C}_{g}(x, y)=\varphi(xy)-x\varphi(y)-y\varphi(x)
\qquad 
\left(x, y\in\mathbb{F}\right)
\]
hold. 
Due to the definition of the function $\mathscr{C}_{g}$, this yields that 
\[
 g(x)=\varphi(x)+\alpha(x) 
\qquad 
\left(x\in\mathbb{F}\right), 
\]
where the function $\varphi$ fulfils the above identity and $\alpha\colon\mathbb{F}\to X$ is additive. 
Writing this representation of the function $g$ into equation ($\mathscr{E}$), we have that 
\begin{equation}\label{Eq2.3}
 f(x+y)-f(x)-f(y)=\alpha(xy)-x\alpha(y)-y\alpha(x) 
\qquad 
\left(x, y\in\mathbb{F}\right). 
\end{equation}
Since the function $\alpha$ is additive, the twp place function 
\[
 \mathscr{D}_{\alpha}(x, y)=\alpha(xy)-x\alpha(y)-y\alpha(x) 
\qquad 
\left(x, y\in\mathbb{F}\right)
\]
is a symmetric, biadditive function. 
Therefore, $\mathscr{D}_{\alpha}$ can be written as the Cauchy difference of its trace, that is 
\begin{multline*}
 \mathscr{D}_{\alpha}(x, y)=
\frac{1}{2}\alpha((x+y)^{2})-(x+y)\alpha(x+y)
\\
-\left(\frac{1}{2}\alpha(x^{2})-x\alpha(x)\right)
-\left(\frac{1}{2}\alpha(y^{2})-y\alpha(y)\right)
\qquad 
\left(x, y\in\mathbb{F}\right). 
\end{multline*}
In view of equation \eqref{Eq2.3}, this yields that the function 
\[
 x\mapsto f(x)-\left(\frac{1}{2}\alpha(x^{2})-x\alpha(x)\right) 
\qquad 
\left(x\in\mathbb{F}\right)
\]
is additive. 
Thus there exists an additive function $\beta\colon\mathbb{F}\to X$ such that 
\[
 f(x)=\beta(x)+\frac{1}{2}\alpha(x^{2})-x\alpha(x)
\qquad 
\left(x\in\mathbb{F}\right). 
\]

\end{proof}

According to Theorem \ref{T1.1}, with the aid of the previous
result, the following corollary can immediately be obtained. 

\begin{cor}
Let $\mathbb{F}$ be an ordered field, $X$ be a vector space over $\mathbb{F}$,
$\mathbb{F}_{+}=\left\{x\in\mathbb{F}\vert x>0\right\}$ and
$f, g:\mathbb{F}_{+}\rightarrow X$ be functions such that
\[
f(x+y)-f(x)-f(y)=g(xy)-xg(y)-yg(x)
\]
holds for all $x, y\in\mathbb{F}_{+}$.
Then the functions $f$ and $g$ can be extended to functions
$\widetilde{f}, \widetilde{g}:\mathbb{F}\rightarrow X$ such that
\[
\widetilde{f}(x)=\beta(x)+\frac{1}{2}\alpha(x^{2})-x\alpha(x)
\qquad
\left(x, y\in\mathbb{F}\right)
\]
and
\[
\widetilde{g}(x)=\varphi(x)+\alpha(x), 
\qquad
\left(x, y\in\mathbb{F}\right)
\]
where $\alpha, \beta\colon\mathbb{F}\to X$ are additive function and $\varphi\colon\mathbb{F}\to X$ fulfils 
\[
 \varphi(xy)=x\varphi(y)+y\varphi(x) 
\qquad 
\left(x, y\in\mathbb{F}\right). 
\]
\end{cor}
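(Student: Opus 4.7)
The plan is to rerun the proof of Theorem~\ref{T2.1} essentially verbatim, but with the positive cone $\mathbb{F}_+$ in place of the full field $\mathbb{F}$, and then to extend the resulting building blocks $\alpha,\beta,\varphi$ from $\mathbb{F}_+$ to $\mathbb{F}$. The ``Moreover'' clause of Theorem~\ref{T1.1} legitimizes the first half: the cocycle characterization already holds on the positive part $A_+$ of an ordered integral domain $A$, and $\mathbb{F}_+$ is closed under both ring operations, so the formal apparatus of two-place Cauchy differences causes no trouble.

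Concretely, I would introduce the functions $\mathscr{C}_{f},\mathscr{D}_{f},\mathscr{C}_{g},\mathscr{D}_{g}$ exactly as in the proof of Theorem~\ref{T2.1}, this time on $\mathbb{F}_+\times\mathbb{F}_+$. The $A_+$-version of Theorem~\ref{T1.1} guarantees that both pairs $(\mathscr{C}_{f},\mathscr{D}_{f})$ and $(\mathscr{C}_{g},\mathscr{D}_{g})$ satisfy $(\alpha)$--$(\varepsilon)$, while the hypothesis gives the key identity $\mathscr{C}_{f}=\mathscr{D}_{g}$ on $\mathbb{F}_+\times\mathbb{F}_+$. The same chain of manipulations as in Theorem~\ref{T2.1} --- writing $(\varepsilon)$ for $\mathscr{D}_{g}$, swapping $x$ and $z$, subtracting, and using the symmetry and cocycle properties of $\mathscr{C}_{f}$ --- then yields homogeneity of $\mathscr{C}_{g}$ on $\mathbb{F}_+$, so Theorem~\ref{T1.2} supplies a function $\varphi\colon\mathbb{F}_+\to X$ satisfying the Leibniz rule with $g-\varphi$ additive on $\mathbb{F}_+$. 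Feeding the representation $g=\varphi+\alpha$ into $(\mathscr{E})$ and invoking the Cauchy-difference representation of the symmetric biadditive map $\mathscr{D}_{\alpha}$ delivers the asserted formula for $f$ on $\mathbb{F}_+$.

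The step beyond Theorem~\ref{T2.1}, and the only real obstacle, is the extension to the whole field. Any additive function on the semigroup $(\mathbb{F}_+,+)$ admits a unique additive extension to $(\mathbb{F},+)$ via the difference representation $x=u-v$ with $u,v\in\mathbb{F}_+$; this takes care of $\alpha$ and $\beta$. For $\varphi$ I would extend additively in the same way and then verify that the Leibniz identity $\varphi(xy)=x\varphi(y)+y\varphi(x)$ persists on all of $\mathbb{F}$, which reduces to a handful of sign cases using the identity already known on $\mathbb{F}_+$ together with $\varphi(0)=0$ and the bilinear behaviour of the left- and right-hand sides in each factor. Once $\alpha,\beta,\varphi$ have been extended, the functions $\widetilde{f}$ and $\widetilde{g}$ are defined on $\mathbb{F}$ by the formulas in the statement, and by construction their restrictions to $\mathbb{F}_+$ agree with $f$ and $g$. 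The bookkeeping of this extension step is really the only non-routine portion; everything else is a direct transcription of the proof of Theorem~\ref{T2.1}.
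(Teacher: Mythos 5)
Your overall strategy is exactly the one the paper intends: the paper offers no written proof beyond the remark that the corollary follows from the ``Moreover'' clause of Theorem~\ref{T1.1} together with Theorem~\ref{T2.1}, and your first two paragraphs correctly flesh that out (the argument of Theorem~\ref{T2.1} survives verbatim on $\mathbb{F}_{+}$ because $1\in\mathbb{F}_{+}$ and $\mathbb{F}_{+}$ is closed under addition, multiplication and division). The extension of $\alpha$ and $\beta$ is also fine: an additive map on $(\mathbb{F}_{+},+)$ extends uniquely to an additive map on $\mathbb{F}$ via the representation $x=u-v$ with $u,v\in\mathbb{F}_{+}$, and since $x^{2}\in\mathbb{F}_{+}$ for $x\in\mathbb{F}_{+}$, the displayed formula for $\widetilde{f}$ does restrict to $f$ on $\mathbb{F}_{+}$.

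The flaw is in the step you yourself single out as the only non-routine one: you propose to extend $\varphi$ ``additively in the same way''. But the $\varphi$ produced by Theorem~\ref{T1.2} is not additive on $\mathbb{F}_{+}$ --- it satisfies only the Leibniz identity, and by construction its Cauchy difference equals $\mathscr{C}_{g}$, which is nonzero unless $g$ itself is additive. Consequently the difference recipe $\widetilde{\varphi}(u-v)=\varphi(u)-\varphi(v)$ is not well defined: for $\mathbb{F}=X=\mathbb{R}$, the pair $f\equiv 0$, $g(x)=\varphi(x)=x\log x$ solves $(\mathscr{E})$ on $\mathbb{R}_{+}$ with $\alpha=\beta=0$, yet $u-v=u'-v'$ does not force $\varphi(u)-\varphi(v)=\varphi(u')-\varphi(v')$ (compare $(u,v)=(2,1)$ with $(u',v')=(3,2)$). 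The correct extension is multiplicative rather than additive: putting $x=y=1$ in the Leibniz rule gives $\varphi(1)=0$, any Leibniz extension must satisfy $\widetilde{\varphi}(1)=-2\widetilde{\varphi}(-1)$, hence $\widetilde{\varphi}(-1)=0$ and $\widetilde{\varphi}(-x)=\widetilde{\varphi}((-1)x)=-\varphi(x)$ for $x\in\mathbb{F}_{+}$. Defining $\widetilde{\varphi}(0)=0$ and $\widetilde{\varphi}(-x)=-\varphi(x)$, the sign-case verification you allude to does go through and yields the Leibniz identity on all of $\mathbb{F}$. So the gap is repairable, but the repair is a different construction from the one you wrote down, and as stated your extension of $\varphi$ does not exist.
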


From Theorem \ref{T2.1}. with $g(x)=-\dfrac{\mu}{\lambda}f(x)$ the following corollary can be
derived easily.

\begin{cor}\label{C2.2}
Let $\mathbb{F}$ be a field and $X$ be a vector space over $\mathbb{F}$,
$\lambda, \mu\in \mathbb{F}\setminus\left\{0\right\}$
be arbitrarily fixed.
Then the function $f:\mathbb{F}\rightarrow X$ is a derivation if and only if
\[
\lambda\left[f(x+y)-f(x)-f(y)\right]+
\mu\left[f(xy)-xf(y)-yf(x)\right]=0
\]
holds for all $x, y\in \mathbb{F}$.
\end{cor}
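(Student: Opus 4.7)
The forward direction is immediate: if $f$ is a derivation, each bracket in the displayed equation vanishes separately, so the equation holds trivially for every choice of nonzero $\lambda,\mu$.

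For the converse, my plan is to recognise the hypothesis as an instance of equation $(\mathscr{E})$ from Theorem~\ref{T2.1} by setting $g(x):=-\tfrac{\mu}{\lambda}f(x)$. Dividing the hypothesis by $\lambda$ and absorbing the factor into $g$, the pair $(f,g)$ becomes an $(\mathscr{E})$-pair, and Theorem~\ref{T2.1} supplies additive maps $\alpha,\beta\colon\mathbb{F}\to X$ and a map $\varphi\colon\mathbb{F}\to X$ satisfying the Leibniz rule with
\[
f(x)=\beta(x)+\tfrac{1}{2}\alpha(x^{2})-x\alpha(x),\qquad -\tfrac{\mu}{\lambda}f(x)=\varphi(x)+\alpha(x).
\]
My goal will be to show that the symmetric biadditive form
\[
B_{\alpha}(x,y):=\alpha(xy)-x\alpha(y)-y\alpha(x)
\]
vanishes identically. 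Once this is known, the correction $\tfrac{1}{2}\alpha(x^{2})-x\alpha(x)$ collapses to zero, so $f=\beta$ is additive, and the identity $f(xy)-xf(y)-yf(x)=-\tfrac{\lambda}{\mu}B_{\alpha}=0$ forces $\beta$ to obey the Leibniz rule as well, turning $f$ into a derivation.

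The main calculation will be a double evaluation of $D_{f}(x,y):=f(xy)-xf(y)-yf(x)$. A short direct computation from the representation of $f$ gives $C_{f}(x,y)=B_{\alpha}(x,y)$, so the hypothesis rewrites as
\[
D_{f}(x,y)=-\tfrac{\lambda}{\mu}B_{\alpha}(x,y),
\]
a function biadditive in $(x,y)$. On the other hand, expanding $D_f$ directly from the form of $f$ yields
\[
D_{f}(x,y)=B_{\beta}(x,y)+\tfrac{1}{2}\bigl[B_{\alpha}(xy,xy)-xB_{\alpha}(y,y)-yB_{\alpha}(x,x)\bigr].
\]
Equating the two expressions, the bracketed term is forced to be biadditive in $(x,y)$. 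Replacing $x$ by $x_{1}+x_{2}$ and subtracting the values at $x_{1}$ and $x_{2}$, the biadditivity of $B_{\alpha}$ in its own two arguments causes every term except two to cancel and leaves the homogeneity identity
\[
B_{\alpha}(x_{1}y,x_{2}y)=y\,B_{\alpha}(x_{1},x_{2}).
\]
Specialising $x_{1}=x_{2}=1$ then gives $B_{\alpha}(y,y)=-y\,\alpha(1)$, a linear function of $y$. Since $B_{\alpha}$ is a symmetric biadditive form and $\mathrm{char}\,\mathbb{F}\neq 2$ (implicit already in the factor $\tfrac12$ appearing in the statement), polarisation forces $B_{\alpha}\equiv 0$, as required.

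The step I expect to be the main obstacle is the extraction of the homogeneity relation $B_{\alpha}(x_{1}y,x_{2}y)=y\,B_{\alpha}(x_{1},x_{2})$: substituting the representation of $f$ into the Leibniz identity for $\varphi$ directly only reproduces the tautology $D_{f}=-\tfrac{\lambda}{\mu}B_{\alpha}$ and gives no new information about $\alpha$. The non-trivial input has to come from contrasting the biadditivity of the right-hand side of the hypothesis with the a priori non-biadditive expression for $D_{f}$ produced by the representation; once this tension has been exploited to yield the homogeneity relation, polarisation and the subsequent reduction of $f$ to an additive Leibniz map are routine bookkeeping.
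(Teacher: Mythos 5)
Your proposal is correct and follows exactly the route the paper indicates: substitute $g=-\tfrac{\mu}{\lambda}f$ to make the hypothesis an instance of $(\mathscr{E})$ and invoke Theorem~\ref{T2.1}. The paper dismisses the rest as ``derived easily,'' and your completion of that step is sound and is the natural one --- the hypothesis forces $D_f=-\tfrac{\lambda}{\mu}\mathscr{D}_{\alpha}$ to be biadditive, which against the expansion of $D_f$ from the representation of $f$ yields the homogeneity relation $\mathscr{D}_{\alpha}(x_1y,x_2y)=y\,\mathscr{D}_{\alpha}(x_1,x_2)$, and polarisation (using $\operatorname{char}\mathbb{F}\neq 2$, already implicit in the $\tfrac12$ of Theorem~\ref{T2.1}) then kills $\mathscr{D}_{\alpha}$, so $f=\beta$ is additive with vanishing Leibniz difference.
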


\begin{ackn}
I wish to express my gratitude to the anonymous referee 
for several helpful comments and for drawing my attention to the paper \cite{Eba79}. 
\end{ackn}

\end{document}